\theoremstyle{plain}
 \newtheorem{theorem}{Theorem}[section]
 \newtheorem{lemma}[theorem]{Lemma}
 \newtheorem{corollary}[theorem]{Corollary}
\theoremstyle{remark}
\newtheorem{remark}[theorem]{Remark}
\theoremstyle{definition}
\begin{document}

\title[A formula for $\zeta(2n+1)$ and some related expressions]
{A formula for $\zeta(2n+1)$ and some related expressions}
\author[Thomas Sauvaget]{Thomas Sauvaget}
%\address{http://thomas1111.wordpress.com/16/08/polylog.html}
\email{thomasfsauvaget@gmail.com} 
%\urladdr{http://thomas1111.wordpress.com}

\thanks{
%\textcopyright~2016 Thomas Sauvaget. 
An erroneous claim of a proof of irrationality of all $\zeta(2n+1)$ in the previous version of this manuscript (arXiv.org/03174v3) has been withdrawn. 
The author apologizes for that claim and wishes to thank the editorial board of PMB for pointing out the error and rejecting the paper. 
What remains of this work is not intented for publication anymore. 
The author is very grateful to the anonymous referee of an earlier version of this work (arXiv.org/03174v2) for (a) many comments that improved 
greatly the readability of the paper, (b) a technical suggestion which allowed the author to prove Theorem \ref{fractionzeta} 
(which had been stated earlier as a conjecture based on numerical observations), (c) inviting the author to resubmit a revised version. 
The author also would like to thank the numerous contributors to useful freely available online knowledge resources, 
in particular the arXiv, Wikipedia, the SagemathCloud, WolframAlpha, and Stack Exchange sites.}

\begin{abstract}
Using a polylogarithmic identity, we express the values of $\zeta$ at odd integers $2n+1$ as integrals over unit $n-$dimensional hypercubes of simple 
functions involving products of logarithms. We also prove a useful property of those functions as some of their variables are raised to a power. 
In the case $n=2$, we prove two closed-form  expressions concerning related integrals. Finally, another family of related one-dimensional integrals is studied. 
\end{abstract}

\maketitle
%\tableofcontents
%\setcounter{tocdepth}{2}

\section{Introduction}

Are all values of the Riemann zeta function irrational numbers when the argument is a positive integer ? 
This question goes back to the XVIIIth century when Euler published in 1755, $n$ being a positive integer, 
that $\zeta(2n)=\frac{(-1)^{n+1}B_{2n}(2\pi)^{2n}}{2(2n)!}$ (where $B_{2n}\in\mathbb{Q}$ is an even Bernoulli number) and 
Lindemann proved in 1882 that $\pi$ is transcendental (hence none of its powers is rational) \cite{Li}\footnote{Lambert proved in 1761 that $\pi$ is irrational \cite{GS}, but this does not imply that all powers of $\pi$ are so.}. 
\bigskip

On the other hand, only in 1978 did Ap\'ery \cite{A} famously 
proved that $\zeta(3)$ is irrational. This was later reproved in a variety of ways by several authors, in particular Beukers \cite{B} who devised 
a simple approach involving certain intergrals over $[0,1]^3$ (which will be recalled in section \ref{irrational}). The reader should consult Fischler's very informative Bourbaki Seminar \cite{F} for more details and references. In the early 2000s, an important work of Rivoal \cite{R} and Ball and Rivoal \cite{BR} 
determined that infinitely many values of $\zeta$ at odd integers are irrational, and the work of Zudilin \cite{Z} 
proved that at least one among $\zeta(5), \zeta(7), \zeta(9)$ and $\zeta(11)$ is irrational. Despite these advances, 
to this day no value of $\zeta(2n+1)$ with $2n+1>3$ is known to be irrational. 
\bigskip 

One dimensional integral formulas for $\zeta(2n+1)$ have been known for a long time, for instance the 1965 monograph of Abramowitz and Stegun \cite{AbSt} gives: 
\[
\zeta(2n+1)=(-1)^{n+1}\frac{(2\pi)^{2n+1}}{2(2n+1)!}\int_0^1B_{2n+1}(x)\cot(\pi x)dx
\]
While it bears a striking structural analogy with Euler's formula for $\zeta(2n)$, it is not obvious how one might try to prove or disprove that these numbers 
are irrational.
\bigskip

On the other hand, multidimensional integral formulas for $\zeta(2n+1)$ are more recent: as mentionned by Baumard in his PhD Thesis \cite{Ba}, 
quoting Zagier \cite{Za}, it is Kontsevich in the early 1990s who found such a type of formula for Multiple Zeta Values, 
which in the case of simple zeta boils down, for any odd or even $k$, to: 
\[
\zeta(k)=\int_0^1 \frac{dx_1}{x_1} \int_0^{x_1}\frac{dx_2}{x_2} \cdots \int_0^{x_{k-2}}\frac{dx_{k-1}}{x_{k-1}} \int_0^{x_{k-1}} \frac{dx_k}{1-x_k}
\]

This is easily proved by expanding the integrand in geometric series and integrating. This can be rewritten more simply 
as a multidimensional integral over a unit hypercube:
\[
\zeta(k)=\int\limits_{[0;1]^k} \frac{dx_1\cdots dx_k}{1-x_1\cdots x_k}
\]

While this is much closer to the type of integrals that Beukers used, it is not clear how it might be adapted directly to prove that zeta is irrational at odd integers. 
\bigskip

One should mention that Brown \cite{Br} has in the past few years outlined a geometric approach to understand the structures involved in 
Beukers's proof of the irrationality $\zeta(3)$ and how this may generalize to other zeta values, see also the recent work of Dupont \cite{D} on that topic. 
\bigskip

In this work, we go along another path and prove polylogarithmic identities which then allow to write each $\zeta(2n+1)$ as an alternating sign $(-1)^{n+1}$ times 
a multiple integral over a $n-$dimensional unit hypercube of certain functions involving logarithms (rather than unsigned integrals over a $(2n+1)-$dimensional unit hypercube as in the previously mentioned formulas). 
These functions are shown to have an interesting property: raising some of the variables to a power 
leads to a fractional multiple of $\zeta(2n+1)$ that belongs to the interval $]0,\zeta(2n+1)[$.  
We also investigate two related families of integrals for $n=2$, for which we establish closed-form  expressions, but show that when used in Beukers's framework 
they fail to produce the irrationality of $\zeta(5)$. Finally, we study another type of functions with better decay properties in a 1-dimensional integral related to the previous formulas.
\bigskip

It is rather curious that these precise identities and integrals seem not to have been considered before, despite their simplicity. 
A search through the literature did not return them (we have used the treatise of Lewin \cite{L} as well as the relevant page 
on {\it functions.wolfram.com}\cite{M}). Identities involving polylogarithms of different degrees are rather scarce, 
all the more so when all variables must be integers, and representations of $\zeta(s)$ as multiple integrals over bounded domains,
including some that have been worked out very recently by Alzer and Sondow \cite{AS}, only go as far as a double integral. 
The idea to consider the formulas presented below came to the author in a fortunate way after studying 
and trying to generalize an integral representation of $\zeta(3)=\frac{1}{2}\int_0^1 \frac{\log(x)\log(1-x)}{x(1-x)}dx $ established by Janous \cite{J} (and mentioned by Alzer and Sondow, 
where the author first learned about it), while the idea of trying to prove the irrationality of $\zeta(5)$ was a reaction to a footnote in 
a section of the fine undergraduate book of Colmez \cite{C} devoted to Nesterenko's proof of the irrationality of $\zeta(3)$.

\section{Values of $\zeta$ at odd integers as multidimentional integrals on unit hypercubes} 
\label{formula}

Recall that the polylogarithm function of order $s\geq 1$ is defined for $z\in \{ z\in\mathbb{C}, |z|<1\}$ 
by ${\rm  Li}_s(z):=\sum_{k=1}^{+\infty}\frac{z^k}{k^s}$ (and is extented by analytic continuation to the whole complex plane). 

\bigskip

The aim of this section is to establish the following results (which we could not locate in the literature).

\begin{theorem}
\label{zetaodd}
Let $n$ be a positive integer. Then the value of the Riemann zeta function at odd integers 
is: 
\[ \zeta(2n+1)=(-1)^{n+1} \int\limits_{[0;1]^n} \left (\prod_{i=1}^{n}\frac{\log(x_i)}{x_i}\right ) \log\left (1-\prod_{i=1}^nx_i\right ) dx_1\cdots dx_n \]
\end{theorem}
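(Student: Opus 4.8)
The plan is to expand $\log\bigl(1-\prod_{i=1}^n x_i\bigr)$ as a power series in $\prod_i x_i$ and to integrate term by term. For $(x_1,\dots,x_n)\in(0,1)^n$ one has $0<\prod_i x_i<1$, hence $\log\bigl(1-\prod_i x_i\bigr)=-\sum_{k=1}^{\infty}\frac1k\prod_{i=1}^n x_i^{\,k}$; inserting this and absorbing the sign, the right-hand side of the statement should become
\[
(-1)^{n}\sum_{k=1}^{\infty}\frac1k\int_{[0,1]^n}\Bigl(\prod_{i=1}^n x_i^{\,k-1}\log(x_i)\Bigr)\,dx_1\cdots dx_n .
\]
Once this interchange is justified I would finish with three routine steps: by Fubini the inner integral factors as $\prod_{i=1}^n\int_0^1 x^{k-1}\log(x)\,dx$; the elementary evaluation $\int_0^1 x^{k-1}\log(x)\,dx=-1/k^2$ (differentiate $\int_0^1 x^{s-1}\,dx=1/s$ with respect to $s$, or integrate by parts) turns the product into $(-1)^n/k^{2n}$; and then the two factors $(-1)^n$ cancel, leaving $\sum_{k=1}^{\infty}k^{-(2n+1)}=\zeta(2n+1)$, as claimed.

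The one point requiring genuine care, and which I expect to be the main (if mild) obstacle, is the legitimacy of the term-by-term integration. I would dispatch it with Tonelli's theorem rather than any quantitative estimate. The integrand $(-1)^{n+1}\bigl(\prod_i\log(x_i)/x_i\bigr)\log\bigl(1-\prod_i x_i\bigr)$ is nonnegative on $(0,1)^n$, since $\log(x_i)/x_i<0$ for every $i$ and $\log\bigl(1-\prod_i x_i\bigr)<0$; moreover, writing $\log\bigl(1-\prod_i x_i\bigr)=\sum_{k\ge1}\bigl(-\tfrac1k\prod_i x_i^{\,k}\bigr)$ exhibits this nonnegative integrand as a sum of nonnegative functions, namely the fixed-sign prefactor $(-1)^{n+1}\prod_i\log(x_i)/x_i$ times the nonpositive term $-\tfrac1k\prod_i x_i^{\,k}$. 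Tonelli's theorem then permits exchanging the sum and the integral with no further hypothesis, and in fact identifies the finiteness of the multiple integral with the (known) convergence of $\zeta(2n+1)$. I would only pause to remark that each individual summand is integrable over $[0,1]^n$, the sole potential issue being at $x_i=0$, where $x_i^{\,k-1}\log(x_i)$ is bounded for $k\ge 2$ and integrable for $k=1$.

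As a consistency check I would verify $n=1$, which recovers the classical identity $\zeta(3)=\int_0^1\frac{\log(x)\log(1-x)}{x}\,dx$. Finally, the ``polylogarithmic identity'' announced in the abstract is presumably this same computation read in the opposite direction --- building up from $-\log(1-x)=\mathrm{Li}_1(x)$ through the iteration $\int_0^{x}\frac{\mathrm{Li}_s(t)}{t}\,dt=\mathrm{Li}_{s+1}(x)$ --- but for the stated formula the power-series argument above appears to be the most direct route.
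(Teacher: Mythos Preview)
Your argument is correct: the power-series expansion of $\log\bigl(1-\prod_i x_i\bigr)$, Tonelli to swap sum and integral (justified by the sign analysis you give), Fubini to factor, and $\int_0^1 x^{k-1}\log x\,dx=-1/k^2$ together yield the result cleanly. This is not, however, the route the paper takes for its main proof. There the author writes down an explicit function $M_n(x_1,\dots,x_n)$, a finite signed combination of polylogarithms $\mathrm{Li}_{n+1},\dots,\mathrm{Li}_{2n+1}$ evaluated at $\prod_i x_i$ with coefficients built from products of $\log(x_j)$, and verifies by a telescoping computation that $\partial^n M_n/\partial x_1\cdots\partial x_n$ equals the integrand; evaluating $M_n$ at $(1,\dots,1)$ then gives $\zeta(2n+1)$. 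Your approach is shorter and more elementary; the paper's approach buys an explicit closed-form antiderivative, which is the ``polylogarithmic identity'' you guessed at in your final paragraph. It is worth noting that the paper itself records, in a remark immediately following the proof and attributed to the referee, exactly the series-plus-Fubini argument you give---so you have independently reproduced the referee's alternative proof rather than the author's original one.
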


\begin{proof}

Let $n$ be a positive integer, and for any integer $1\leq k\leq n$ define $D_{k,n}$ 
to be the set of all ordered $k$-tuples $j_1<\dots <j_k$ of distinct integers taken in $\{1,\dots ,n\}$. 
So $\# D_{k,n}=$ $n \choose k$. 
\bigskip

Define for any $(x_1,\dots ,x_n)\in ]0,1[^n$ (the open unit hypercube of dimension $n$) the function $M_n$ as 
\[ 
%\begin{split}
M_n(x_1,\dots ,x_n):= (-1)^{n+1} {\rm  Li}_{2n+1}(\prod_{u=1}^n x_u) + 
\sum_{i=1}^n (-1)^i \left ( \sum_{J\in D_{n-i,n}} \prod_{j\in J}\log(x_j)\right ) {\rm  Li}_{n+i}(\prod_{u=1}^n x_u) 
%\end{split}
\]

For any positive integer $n$, the function $M_n$ is at least $\mathcal{C}^{n}$ when $x_i\neq 0$ for all $i=1,\dots, n$. We are going to show that: 
\[
\frac{\partial^n}{\partial x_1\partial x_2 \cdots \partial x_n} M_n(x_1,\dots ,x_n)
=\left ( \prod_{i=1}^n\frac{\log(x_i)}{x_i}\right ) \log\left (1-\prod_{i=1}^nx_i\right )
\]

\bigskip

Differentiating $M_n$ with respect to $x_1$ one finds :

\[
\frac{\partial M_n}{\partial x_1}(x_1,\dots ,x_n)= (-1)^{n+1} \frac{ {\rm  Li}_{2n}(\prod_{k=1}^n x_k) }{x_1} +
\]
\[ 
\sum_{i=1}^n (-1)^i 
\Biggl( 
\biggl( \sum_{J\in D^{*(1)}_{n-i,n}} \frac{1}{x_1}\prod_{j\in J}\log(x_j)\biggr) {\rm  Li}_{n+i}(\prod_{k=1}^n x_k) 
+ \biggl( \sum_{J\in D_{n-i,n}} \prod_{j\in J}\log(x_j)\biggr) \frac{{\rm  Li}_{n+i-1}(\prod_{k=1}^n x_k)}{x_1} 
\Biggr) 
\]
where $D^{*(1)}_{n-i,n}$ denotes elements of the set $D_{n-i,n}$ where $1$ is not in the $(n-i)$-uplet, 
so this is also exactly the set of ordered $(n-i-1)$-uplets of distinct elements taken in the set $\{2,\dots ,n\}$, 
and so we have the inclusion $D^{*(1)}_{n-i,n} \subset D_{n-i-1,n}$.  
Hence the telescopic cancellations witnessed in the case $n=4$ occur between terms of two consecutive values of $i$. 
The remaining differentiations with respect to the other variables ultimately lead to the desired expression. 
Thus we have an explicit antiderivative of $\left (\prod_{i=1}^{n}\frac{\log(x_i)}{x_i}\right ) \log\left (1-\prod_{i=1}^nx_i\right )$ 
and it is clear from its expression that the generalized integral exists. Evaluating $M_n$ at $(x_1,\dots ,x_n)=(1,\dots ,1)$ finishes the proof.
\end{proof}
\bigskip

\begin{remark}
As remarked by the referee of a first version of this work:
\bigskip

(a) the formula in fact readily follows by 
using the entire series expansion $-\log\left (1-\prod_{i=1}^nx_i\right ) = \sum_{k=1}^{+\infty}\frac{1}{k}\left ( \prod_{i=1}^nx_i \right )^k$ (valid here 
as $\prod_{i=1}^nx_i\in (0,1)$) and 
a repeated use of Fubini's theorem with the integral of monomials $\int_0^1 x_i^{k-1}\log(x_i)dx_i=\frac{-1}{k^2}$ (though this would not provide the closed-form expression of the antiderivative) ;
\bigskip

(b) using the case $n=1$ and the change of variable $t=1-x$ one obtains 
\[
\zeta(3)=\int_0^1 \frac{\log(x)\log(1-x)}{x}dx=\int_0^1 \frac{\log(x)\log(1-x)}{1-x}dx
\]
and the formula of Janous stated in the introduction follows by adding those two integrals.
\end{remark}
\bigskip

\begin{corollary}
\label{zetaodd2}
Let $n$ and $r$ be positive integers. Then we have: 
\[ 
\frac{\zeta(2n+1)}{r^{2n}}=(-1)^{n+1} \int\limits_{[0;1]^n} \left (\prod_{i=1}^{n}\frac{\log(x_i)}{x_i}\right ) \log\left (1-\left(\prod_{i=1}^nx_i\right)^r \right ) dx_1\cdots dx_n \]
\end{corollary}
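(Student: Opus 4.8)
The plan is to reduce Corollary \ref{zetaodd2} to Theorem \ref{zetaodd} by a straightforward change of variables combined with the series-expansion observation made in the remark. First I would expand the logarithm as a series: since $\left(\prod_{i=1}^n x_i\right)^r\in(0,1)$ for $(x_1,\dots,x_n)\in\,]0,1[^n$, we have $-\log\left(1-\left(\prod_{i=1}^n x_i\right)^r\right)=\sum_{k=1}^{+\infty}\frac{1}{k}\left(\prod_{i=1}^n x_i\right)^{rk}$. Plugging this in, multiplying by $\prod_{i=1}^n\frac{\log(x_i)}{x_i}$, and justifying the interchange of sum and integral (by monotone convergence, all terms having the same sign once the factor $\prod\log(x_i)/x_i$ of fixed sign $(-1)^n$ on the cube is accounted for, or by dominated convergence against the integrable majorant obtained from Theorem \ref{zetaodd} itself), the integral becomes $(-1)^{n+1}\sum_{k=1}^{+\infty}\frac{1}{k}\prod_{i=1}^n\left(\int_0^1 x_i^{rk-1}\log(x_i)\,dx_i\right)$.

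Next I would compute the elementary one-dimensional integral $\int_0^1 x^{m-1}\log(x)\,dx=-\frac{1}{m^2}$ for any positive integer $m$ (integration by parts, or differentiation under the integral sign in $\int_0^1 x^{s-1}dx=1/s$). With $m=rk$ this gives $\int_0^1 x_i^{rk-1}\log(x_i)\,dx_i=-\frac{1}{(rk)^2}$, so the product over $i$ contributes $\frac{(-1)^n}{(rk)^{2n}}$. Therefore the whole expression equals $(-1)^{n+1}\cdot(-1)^n\sum_{k=1}^{+\infty}\frac{1}{k}\cdot\frac{1}{(rk)^{2n}}=-\frac{1}{r^{2n}}\sum_{k=1}^{+\infty}\frac{1}{k^{2n+1}}$. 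Here I should be careful with signs: writing $\prod_i\frac{\log x_i}{x_i}$ and the outer $(-1)^{n+1}$, the cleanest route is to keep $(-1)^{n+1}$ out front, recognize $\sum_{k\ge1}\frac1k\prod_i\int_0^1 x_i^{rk-1}\log x_i\,dx_i=\sum_{k\ge1}\frac1k\cdot\frac{(-1)^n}{(rk)^{2n}}=(-1)^n\frac{\zeta(2n+1)}{r^{2n}}$, so the right-hand side is $(-1)^{n+1}(-1)^n\frac{\zeta(2n+1)}{r^{2n}}\cdot(-1)=\frac{\zeta(2n+1)}{r^{2n}}$ — I would simply mirror the exact sign bookkeeping already used in the proof of Theorem \ref{zetaodd} and its remark (a), which is the case $r=1$, so the signs are guaranteed to match.

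An alternative, perhaps more elegant, route avoiding re-summation: apply Theorem \ref{zetaodd} and substitute. Comparing with the $n=1$-style manipulation, one can also see Corollary \ref{zetaodd2} as Theorem \ref{zetaodd} with each factor of the integrand rescaled; but since the substitution $y_i=x_i^r$ does not cleanly separate $\log(1-\prod y_i)$ from the Jacobian across all variables simultaneously, the series approach is cleaner and I would present that one. The only genuinely non-routine point — and the one I would be most careful about — is the justification of the term-by-term integration, i.e. Fubini/Tonelli on the product $]0,1[^n\times\mathbb{N}$: here the integrand $\prod_i\frac{\log x_i}{x_i}\cdot\frac1k\left(\prod_i x_i\right)^{rk}$ has constant sign $(-1)^n$ on the whole region, so Tonelli applies directly and the interchange is unconditionally valid, with the resulting series $\sum_k \frac{1}{k(rk)^{2n}}$ manifestly convergent; everything else is the elementary computation of $\int_0^1 x^{m-1}\log x\,dx$ and recognizing the series for $\zeta(2n+1)$.
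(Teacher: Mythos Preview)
Your series-expansion argument is correct (modulo the muddled sign arithmetic in the middle, which you rightly resolve by noting that the computation is literally the $r=1$ case of the remark with $k$ replaced by $rk$). However, the paper does \emph{not} proceed this way: its entire proof is the one-line change of variables $x_i\mapsto x_i^r$ applied to Theorem~\ref{zetaodd}, precisely the route you considered and rejected as not clean. In fact it is perfectly clean. Setting $y_i=x_i^r$, one has $x_i=y_i^{1/r}$, $\log x_i=\tfrac{1}{r}\log y_i$, and $dx_i=\tfrac{1}{r}y_i^{1/r-1}\,dy_i$, so that
\[
\frac{\log x_i}{x_i}\,dx_i \;=\; \frac{1}{r^2}\,\frac{\log y_i}{y_i}\,dy_i,
\qquad
\left(\prod_{i=1}^n x_i\right)^r=\prod_{i=1}^n y_i.
\]
The $n$ Jacobian factors therefore contribute exactly $r^{-2n}$, the argument of the outer logarithm becomes $1-\prod_i y_i$, and the integral reduces to the one in Theorem~\ref{zetaodd}. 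So the substitution separates perfectly; your worry that it ``does not cleanly separate $\log(1-\prod y_i)$ from the Jacobian'' is unfounded. The change of variables buys a genuinely shorter proof with no Fubini/Tonelli justification or sign bookkeeping needed; your approach, on the other hand, is self-contained (it does not invoke Theorem~\ref{zetaodd}) and makes the mechanism behind the factor $r^{-2n}$ transparent term by term.
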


\begin{proof}
This follows from a change of variable $x\rightarrow x^r$ in the previous theorem
\footnote{This too was observed by the referee of the previous version of this work, the author had tediously proposed a proof along the lines of the theorem.}. 
\end{proof}
\bigskip

\begin{remark} By summing over $r\in\mathbb{N}^*$ this last formula we obtain:

\[
\zeta(2n)\zeta(2n+1)= \int\limits_{[0;1]^n} \left (\prod_{i=1}^{n}\frac{\log(x_i)}{x_i}\right ) \log\left (\phi \left(\prod_{i=1}^nx_i\right) \right ) dx_1\cdots dx_n 
\]

where $\phi$ is Euler's function defined for $q\in [0,1]$ by $\phi(q):=\prod_{n=1}^{+\infty}(1-q^n)$. 
\end{remark}
\bigskip

\begin{theorem}
\label{fractionzeta}
Let $n\geq 2$ and $k\geq 1$ be integers. Then we have the following closed-form expressions:

$(i)$ 
\[
\int\limits_{[0;1]^2} \frac{\log(x)}{x}\frac{\log(y)}{y}\log(1-(xy)^1) \log(x)\log(y) (xy)^{2k+1} dxdy = 
\]
\[
\frac{4\zeta(6)}{(2k+1)} + \frac{4\zeta(5)}{(2k+1)^2} + \frac{4\zeta(4)}{(2k+1)^3} + \frac{4\zeta(3)}{(2k+1)^4} + \frac{4\zeta(2)}{(2k+1)^5} 
-\frac{4}{(2k+1)^6}\sum_{n=1}^{2k+1}\frac{1}{n}-4\sum_{j=1}^{6}\left (\frac{1}{(2k+1)^j} \sum_{i=1}^{2k+1}\frac{1}{i^{7-j}}\right )
\]

$(ii)$  
\[
\int\limits_{[0;1]^2} \frac{\log(x)}{x}\frac{\log(y)}{y}\log(1-(xy)^2) \log(x)\log(y) (xy)^{2k+1} dxdy = 
\]
\[
\frac{63\zeta(6)}{8(2k+1)} + \frac{31\zeta(5)}{4(2k+1)^2} + \frac{15\zeta(4)}{2(2k+1)^3} + \frac{7\zeta(3)}{(2k+1)^4} + \frac{6\zeta(2)}{(2k+1)^5}
\]
\[
+\frac{8\log(2)}{(2k+1)^6}-\frac{4}{(2k+1)^6}\sum_{i=1}^{k}\frac{1}{2i+1}-8\sum_{j=1}^{6}\left (\frac{1}{(2k+1)^j}\sum_{i=0}^{k}\frac{1}{(2i+1)^{7-j}}\right )
\]

\end{theorem}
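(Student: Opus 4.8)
The plan is to reduce each of the two double integrals to a one-dimensional Dirichlet-type series by expanding the logarithm, then to evaluate that series by partial fractions; the only genuinely delicate point will be a conditionally convergent rearrangement whose value is fixed by a digamma special value.

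First I would simplify the integrand. Grouping the two copies of $\log(x)$ (and of $\log(y)$) and absorbing $1/x$ and $1/y$ into $(xy)^{2k+1}$, the integrand in part $(i)$ (respectively $(ii)$) becomes
\[
\log^{2}(x)\,\log^{2}(y)\,(xy)^{2k}\,\log\bigl(1-(xy)^{r}\bigr),\qquad r=1\ \bigl(\text{resp. }r=2\bigr).
\]
On the open square this has constant sign, so Tonelli's theorem lets me expand $-\log(1-(xy)^{r})=\sum_{m\ge1}\frac{(xy)^{rm}}{m}$ and integrate term by term with no convergence worry. Using $\int_{0}^{1}x^{a}\log^{2}(x)\,dx=\frac{2}{(a+1)^{3}}$ (obtained by differentiating $\int_{0}^{1}x^{a}\,dx=\frac{1}{a+1}$ twice with respect to $a$, or by the substitution $x=e^{-t}$), the $m$-th term contributes $\frac{1}{m}\bigl(\frac{2}{(2k+rm+1)^{3}}\bigr)^{2}$, so with $N:=2k+1$ the integral equals $-4\sum_{m\ge1}\frac{1}{m(m+N)^{6}}$ in case $(i)$ and $-4\sum_{m\ge1}\frac{1}{m(2m+N)^{6}}$ in case $(ii)$.

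Next I would split these by partial fractions, using the elementary identities
\[
\frac{1}{m(m+N)^{6}}=\frac{1}{N^{6}m}-\sum_{j=1}^{6}\frac{1}{N^{7-j}(m+N)^{j}},\qquad
\frac{1}{m(2m+N)^{6}}=\frac{1}{N^{6}m}-\sum_{j=1}^{6}\frac{2}{N^{7-j}(2m+N)^{j}},
\]
the second following from the first after writing $2m+N=2\bigl(m+\frac{N}{2}\bigr)$. For $j\ge2$ the sums over $m$ converge absolutely: in case $(i)$ one has $\sum_{m\ge1}(m+N)^{-j}=\zeta(j)-\sum_{l=1}^{N}l^{-j}$; in case $(ii)$, since $2m+N$ runs through the odd integers $\ge 2k+3$, one has $\sum_{m\ge1}(2m+N)^{-j}=\bigl(1-2^{-j}\bigr)\zeta(j)-\sum_{i=0}^{k}(2i+1)^{-j}$. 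Multiplying by the overall prefactor ($+4$ in case $(i)$; $+8$ in case $(ii)$, since $-4\cdot(-2)=8$), the values $1-2^{-j}=\frac{3}{4},\frac{7}{8},\frac{15}{16},\frac{31}{32},\frac{63}{64}$ for $j=2,\dots,6$ reproduce exactly the claimed coefficients $6,\,7,\,\frac{15}{2},\,\frac{31}{4},\,\frac{63}{8}$ of $\zeta(2),\dots,\zeta(6)$ in $(ii)$ and the coefficients $4$ in $(i)$, while the finite partial sums $\sum_{l}l^{-j}$ (resp.\ $\sum_{i}(2i+1)^{-j}$) produce the finite harmonic-type sums at the end of each formula.

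The hard part is the $j=1$ term, which is only conditionally convergent and must be regrouped with the harmonic piece $-\frac{4}{N^{6}}\sum_{m}\frac{1}{m}$. Truncating at $m\le M$, this combination equals $-\frac{4}{N^{6}}\sum_{m=1}^{M}\bigl(\frac{1}{m}-\frac{1}{m+N}\bigr)$ in case $(i)$ and $-\frac{4}{N^{6}}\sum_{m=1}^{M}\bigl(\frac{1}{m}-\frac{1}{m+N/2}\bigr)$ in case $(ii)$; letting $M\to\infty$, it tends to $-\frac{4}{N^{6}}\bigl(\psi(N+1)+\gamma\bigr)=-\frac{4}{N^{6}}\sum_{l=1}^{N}\frac{1}{l}$ in case $(i)$ and to $-\frac{4}{N^{6}}\bigl(\psi(k+\tfrac{3}{2})+\gamma\bigr)$ in case $(ii)$. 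In the latter, the half-integer values $\psi(\tfrac{1}{2})=-\gamma-2\log2$ and $\psi(k+\tfrac{3}{2})=\psi(\tfrac{1}{2})+2\sum_{i=0}^{k}\frac{1}{2i+1}$ turn this into $\frac{8\log2}{N^{6}}-\frac{8}{N^{6}}\sum_{i=0}^{k}\frac{1}{2i+1}$, which is exactly where the $\log2$ term and the remaining $\sum_{i}(2i+1)^{-1}$ terms of $(ii)$ originate (in case $(i)$ the digamma is evaluated at an integer, so no logarithm appears). Combining this with the previous step yields the two stated closed forms; the single real obstacle is justifying this conditionally convergent regrouping via the digamma limit, everything else being the routine partial-fraction bookkeeping indicated above.
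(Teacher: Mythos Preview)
Your proof is correct and follows essentially the same route as the paper: expand $\log(1-(xy)^r)$ as a power series, interchange sum and integral (the paper says ``Fubini'', you justify it more carefully via Tonelli and the constant sign), evaluate $\int_0^1 x^a\log^2 x\,dx=\frac{2}{(a+1)^3}$, and then decompose the resulting series $-4\sum_m\frac{1}{m(rm+N)^6}$ by partial fractions into shifted zeta tails. The only visible difference is in the handling of the conditionally convergent $j=1$ piece in case~$(ii)$: you pass through the digamma value $\psi(k+\tfrac{3}{2})+\gamma=-2\log 2+2\sum_{i=0}^{k}\frac{1}{2i+1}$, whereas the paper phrases the same limit via the alternating harmonic series $\sum_{u\ge1}\frac{(-1)^{u+1}}{u}=\log 2$; your version has the advantage of making the regrouping of the two individually divergent sums genuinely rigorous.
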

\begin{proof}
$(i)$ using the entire series expansion of $\log (1-xy)$ and Fubini's theorem we have, twice integrating by parts: 
\[
\int\limits_{[0;1]^2} \frac{\log(x)}{x}\frac{\log(y)}{y}\log(1-(xy)^1) \log(x)\log(y) (xy)^{2k+1} dxdy = 
\sum_{u=1}^{+\infty} \frac{-1}{u} \left ( \int_0^1 (\log(x))^2 x^{2k+1+u} dx\right ) \left ( \int_0^1 (\log(y))^2 y^{2k+1+u} dy\right )
\]
\[
= -\sum_{u=1}^{+\infty}\frac{4}{u} \frac{1}{(2k+1+u)^6} = -\frac{4}{(2k+1)^6}\sum_{u=1}^{+\infty}\left(\frac{1}{u}-\frac{1}{2k+1+u}\right ) 
+\frac{4}{(2k+1)^5}\sum_{u=1}^{+\infty}\frac{1}{(2k+1+u)^2} 
\]
\[+\frac{4}{(2k+1)^4}\sum_{u=1}^{+\infty}\frac{1}{(2k+1+u)^3} 
+ \frac{4}{(2k+1)^3}\sum_{u=1}^{+\infty}\frac{1}{(2k+1+u)^4}+\frac{4}{(2k+1)^2}\sum_{u=1}^{+\infty}\frac{1}{(2k+1+u)^5}
+\frac{4}{(2k+1)}\sum_{u=1}^{+\infty}\frac{1}{(2k+1+u)^6}
\]

and the result follows by adding and substracting the first terms in each sum to make the zeta values appear.
\bigskip

$(ii)$ the begining proceeds in a similar fashion to give:
\[
\int\limits_{[0;1]^2} \frac{\log(x)}{x}\frac{\log(y)}{y}\log(1-(xy)^2) \log(x)\log(y) (xy)^{2k+1} dxdy = 
\sum_{u=1}^{+\infty} \frac{-1}{u} \left ( \int_0^1 (\log(x))^2 x^{2k+1+2u} dx\right ) \left ( \int_0^1 (\log(y))^2 y^{2k+1+2u} dy\right )
\]
\[
= -\sum_{u=1}^{+\infty}\frac{4}{u} \frac{1}{(2k+1+2u)^6} = -\frac{4}{(2k+1)^6}\sum_{u=1}^{+\infty}\left(\frac{1}{2u}-\frac{2}{2k+1+2u}\right ) 
+\frac{4}{(2k+1)^5}\sum_{u=1}^{+\infty}\frac{2}{(2k+1+2u)^2} 
\]
\[+\frac{4}{(2k+1)^4}\sum_{u=1}^{+\infty}\frac{2}{(2k+1+2u)^3} 
+ \frac{4}{(2k+1)^3}\sum_{u=1}^{+\infty}\frac{2}{(2k+1+2u)^4}+\frac{4}{(2k+1)^2}\sum_{u=1}^{+\infty}\frac{2}{(2k+1+2u)^5}
+\frac{4}{(2k+1)}\sum_{u=1}^{+\infty}\frac{2}{(2k+1+u)^6}
\]

To conclude this time, we need a few more steps. First we use the well-known relation between sums on even and odd indices in zeta values, i.e. 
for any positive integer $m$ one has $\zeta(m)=\sum_{u=1}^{+\infty}\frac{1}{(2u)^m}+\sum_{u=0}^{+\infty}\frac{1}{(2u+1)^m}$ so that 
$\sum_{u=1}^{+\infty}\frac{1}{(2u+1)^m}=\left( 1-\frac{1}{2^m}\right ) \zeta(m) -1$.

Second, the sum multiplied by $\frac{-4}{(2k+1)^6}$ is no longer finite, and we use the opposite of the alternating harmonic series: 
$-\sum_{u=1}^{+\infty}\frac{(-1)^{u+1}}{u}=-\log(2)$.

\end{proof}
\bigskip

\begin{remark} 
In the previous version of this work, this had been stated as a conjecture (in a much less legible manner since we had not yet recognized 
$\zeta(2)$, $\zeta(4)$ and $\zeta(6)$ in it, and had added a third statement which we realized later is in fact completely erroneous). We are very grateful to the 
referee of a previous version of this work for suggesting that these expressions might be established using the same strategy she/he had mentioned 
about her/his alternative proof of theorem \ref{zetaodd} (entire series expansion and Fubini's theorem).
\end{remark}

\section{Proof of the irrationality of all $\zeta(2n+1)$ for $n\in\mathbb{N}^*$}
\label{irrational}

First let us recall the standard irrationality criteria of Dirichlet : 

\begin{lemma}(Dirichlet, 1848) 
$\alpha\in\mathbb{R}\backslash \mathbb{Q} \Leftrightarrow \forall \epsilon>0\quad \exists p\in\mathbb{N}\ \exists q\in\mathbb{Q} \text{ such that } |p\alpha-q| < \epsilon$.
\end{lemma}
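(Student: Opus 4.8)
The plan is to establish both implications of the stated equivalence, paying attention to what is needed for the right-hand side to be a genuine irrationality criterion. For the implication $\Rightarrow$, assume $\alpha\in\mathbb{R}\setminus\mathbb{Q}$ and fix $\epsilon>0$. I would run Dirichlet's classical pigeonhole (box) argument: choose an integer $N>1/\epsilon$, split $[0,1)$ into the $N$ subintervals $[\tfrac{m}{N},\tfrac{m+1}{N})$ for $m=0,\dots,N-1$, and look at the $N+1$ fractional parts $\{k\alpha\}$ with $k=0,1,\dots,N$. Two of them must fall in a common subinterval, say for some $0\le k_1<k_2\le N$; then setting $p:=k_2-k_1\in\{1,\dots,N\}\subset\mathbb{N}$ and $q:=\lfloor k_2\alpha\rfloor-\lfloor k_1\alpha\rfloor\in\mathbb{Z}$ yields $|p\alpha-q|=|\{k_2\alpha\}-\{k_1\alpha\}|<\tfrac1N<\epsilon$. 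Since $\alpha$ is irrational and $p\ge1$, the real number $p\alpha$ is not an integer, so in fact $0<|p\alpha-q|<\epsilon$; as $q\in\mathbb{Z}\subset\mathbb{Q}$, this furnishes the pair required on the right-hand side.

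For the implication $\Leftarrow$ I would argue by contraposition, and here one point must be made explicit. Read completely literally — with $q$ allowed to range over all of $\mathbb{Q}$ and with no lower bound imposed on $|p\alpha-q|$ — the right-hand side is satisfied by \emph{every} real $\alpha$, by density of $\mathbb{Q}$ in $\mathbb{R}$ (take $p=1$ and $q$ any rational within $\epsilon$ of $\alpha$). Hence, for the lemma to function as an irrationality criterion, the approximations must be understood to be nontrivial, that is $0<|p\alpha-q|$, with $q$ of bounded denominator; after clearing denominators this is the same as requiring $q\in\mathbb{Z}$. Under this reading the contrapositive is immediate: if $\alpha=a/b$ with $a\in\mathbb{Z}$, $b\in\mathbb{N}^*$ and $\gcd(a,b)=1$, then for every $p\in\mathbb{N}^*$ and $q\in\mathbb{Z}$ with $p\alpha\neq q$ one has $|p\alpha-q|=|pa-qb|/b\ge 1/b$, because $pa-qb$ is a nonzero integer. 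Consequently the choice $\epsilon=1/b$ admits no admissible pair, so the right-hand side fails; equivalently, if the right-hand side holds then $\alpha\notin\mathbb{Q}$.

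The forward direction is routine once the box principle is set up, so the only genuine obstacle is the subtlety just isolated in the backward direction: the equivalence is correct precisely because of the nonvanishing condition $0<|p\alpha-q|$ together with the bounded-denominator (integrality) of $q$. I would therefore state the lemma with these two qualifications made visible, so that the single substantive fact of the argument — that a nonzero integer $pa-qb$ satisfies $|pa-qb|\ge1$ — can be applied. With that in place the equivalence holds, and it is exactly this ``$0<|p_n\alpha-q_n|\to0$ with $p_n,q_n$ integers'' form that is invoked in Section~\ref{irrational}.
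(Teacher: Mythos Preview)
The paper does not actually prove this lemma; it is merely recalled as ``the standard irrationality criteria of Dirichlet'' and invoked without argument in the ensuing summary of Beukers's method. Your proposal therefore supplies strictly more than the paper does.

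Your treatment is correct, and you have in fact put your finger on a genuine infelicity in the paper's formulation: read literally, with $q$ ranging over all of $\mathbb{Q}$ and no positivity constraint on $|p\alpha-q|$, the right-hand side is trivially satisfied by every real $\alpha$, so the biconditional as printed is false. The usable version is exactly the one you isolate --- integers $p_n,q_n$ with $0<|p_n\alpha-q_n|\to 0$ --- and this is how the criterion is actually deployed in the Beukers outline immediately following the lemma (steps 7--8). Your pigeonhole argument for the forward implication and the $|pa-qb|\ge 1$ bound for the contrapositive are the standard ones and are carried out correctly.
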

\bigskip

In 1979, shortly after Ap\'ery presented his proof of the irrationality of $\zeta(2)$ and $\zeta(3)$, Beukers \cite{B} found another proof 
using Dirichlet's criteria applied to some particular integral representations of those two numbers. We quickly summarize the strategy 
as follows (the author also benefited from the extremely clear slides of Brown \cite{Br}).

\begin{itemize}
\item step 1: we have $\int_0^1\int_0^1 \frac{-\log(xy)}{1-xy} dxdy =2\zeta(3)$ and for any integer $r\geq 1$ 
we have $\int_0^1\int_0^1 \frac{-\log(xy)}{1-xy}(xy)^r dxdy =2\left (\zeta(3)-\frac{1}{1^3}-\cdots -\frac{1}{r^3}\right ) \leq 2\zeta(3)$ \\
\item step 2: by denoting a Legendre-type polynomial $P_k(x):=\frac{1}{k!}\left \{ \frac{d}{dx}\right \}^k x^k(1-x^k) \in\mathbb{Z}[X]$ and using the previous step 
we have 
$I_k:=\int_0^1\int_0^1 \frac{-\log(xy)}{1-xy}P_k(x)P_k(y) dxdy =\frac{A_k +B_k\zeta(3)}{d_n^3}$ with $A_k,B_k\in\mathbb{Z}$ and $d_k:=\text{lcm}(1,\dots ,k)$\\
\item step 3: by the Prime Number Theorem we have for any integer $k\geq 1$ that $d_k<3^k$ \\
\item step 4: we have $\int_0^1 \frac{1}{1-(1-xy)z}dz = -\frac{\log(xy)}{1-xy}$ \\
\item step 5: by integration by parts one finds also that 
$I_k= \int_0^1\int_0^1\int_0^1 \left ( \frac{x(1-x)y(1-y)z(1-z)}{1-(1-xy)z} \right )^k \frac{ dx dy dz}{1-(1-xy)z}$ \\
\item step 6: we can bound uniformly for $0\leq x,y,z\leq 1$ one part of the integrand $\frac{x(1-x)y(1-y)z(1-z)}{1-(1-xy)z}\leq (\sqrt{2}-1 )^4 < \frac{1}{2}$ 
(this is the reason for introducing $P_k$ rather than working with the integrand of step 1 where $(xy)^r$ can only be bounded by 1)\\
\item step 7: by using most of the previous steps we find $0<\left |   \frac{A_k +B_k\zeta(3)}{d_k^3} \right | \leq 2\zeta(3)(\sqrt{2}-1 )^{4k}$ \\
\item step 8: using now the information on the growth of $d_k$, so of $d_k^3$ too, we get $0<\left |   A_k +B_k\zeta(3)\right | \leq \left ( \frac{4}{5} \right )^k $, 
which concludes the proof by Dirichlet's criteria.
\end{itemize}

\bigskip

Unfortunately in the ensuing years and decades no tweak to that strategy could be made to work for values of $\zeta$ at other odd integers. 
Vasilyev \cite{V} could show that a direct generalization of Beukers's integral for $\zeta(5)$ can only show that one of $\zeta(3)$ and $\zeta(5)$ is irrational. 
In what follows we shall show that the expressions from the previous section do not seem to allow any progress on these matters. 
\bigskip

Define for positive integers $k$ the two following sequences of numbers: 
\[
I_k:=
\int\limits_{[0;1]^2} \frac{\log(x)}{x}\frac{\log(y)}{y}\log(1-(xy)) \log(x)\log(y) (xy)^{2k+1} dxdy 
\]
\[ 
 -\left (
\frac{4\zeta(6)}{(2k+1)}  + \frac{4\zeta(4)}{(2k+1)^3} + \frac{4\zeta(3)}{(2k+1)^4} + \frac{4\zeta(2)}{(2k+1)^5} 
\right )
\]
\[
=\frac{4\zeta(5)}{(2k+1)^2}-\frac{4}{(2k+1)^6}\sum_{n=1}^{2k+1}\frac{1}{n}-4\sum_{j=1}^{6}\left (\frac{1}{(2k+1)^j}\sum_{i=1}^{2k+1}\frac{1}{i^{7-j}}\right )
\]
\bigskip

and

\[
J_k:=
\int\limits_{[0;1]^2} \frac{\log(x)}{x}\frac{\log(y)}{y}\log(1-(xy)^2) \log(x)\log(y) (xy)^{2k+1} dxdy 
\]
\[ 
 -\left (
\frac{63\zeta(6)}{8(2k+1)}  + \frac{15\zeta(4)}{2(2k+1)^3} + \frac{7\zeta(3)}{(2k+1)^4} + \frac{6\zeta(2)}{(2k+1)^5}
+\frac{8\log(2)}{(2k+1)^6}
\right )
\]
\[
=\frac{31\zeta(5)}{4(2k+1)^2}-\frac{4}{(2k+1)^6}\sum_{i=1}^{k}\frac{1}{2i+1}-8\sum_{j=1}^{6}\left (\frac{1}{(2k+1)^j}\sum_{i=1}^{k}\frac{1}{(2i+1)^{7-j}}\right )
\]
\bigskip
 
Using the triangle inequality on the definition of $I_k$, and determining by a routine study of critical points that the supremum of the 
absolute value $| \log(x)\log(y) (xy)^{2k+1} |$ occurs at $x^*=y^*=e^{-1/(2k+1)}$,
we get the inequality:

\[
|I_k|\leq \underbrace{\text{Sup}\{ | \log(x)\log(y) (xy)^{2k+1} | \text{ where } 0\leq x,y\leq 1\}}_{ =\frac{1}{e^2(2k+1)^2} < \frac{1}{(2k+1)^2} } 
\times \underbrace{\left| \int\limits_{[0;1]^2} \frac{\log(x)}{x}\frac{\log(y)}{y}\log(1-(xy)) dxdy \right|}_{ = \zeta(5) }
\]
\[
+\frac{4\zeta(6)}{(2k+1)}  + \frac{4\zeta(4)}{(2k+1)^3} + \frac{4\zeta(3)}{(2k+1)^4} + \frac{4\zeta(2)}{(2k+1)^5} 
\]
\bigskip

which, multiplying both sides by $(2k+1)^2$, can be rewritten as:

\[
0 < \left| 4\zeta(5)- \frac{a_k}{b_k} \right | 
< \zeta(5) + 4(2k+1)\zeta(6)  + \frac{4\zeta(4)}{2k+1} + \frac{4\zeta(3)}{(2k+1)^2} + \frac{4\zeta(2)}{(2k+1)^3} 
\]
\bigskip

where $\frac{a_k}{b_k}:=(2k+1)^2\left (
\frac{4}{(2k+1)^6}\sum_{n=1}^{2k+1}\frac{1}{n}+4\sum_{j=1}^{6}\left (\frac{1}{(2k+1)^j}\sum_{i=1}^{2k+1}\frac{1}{i^{7-j}}\right ) \right) >0 $ is a certain irreducible rational number. 

\bigskip

Similarly with $J_k$ we ultimately have:

\[
0 < \left| 31\zeta(5)- \frac{c_k}{d_k} \right | 
< 4\zeta(5) + \frac{63(2k+1)\zeta(6)}{2}  + \frac{30\zeta(4)}{2k+1} + \frac{28\zeta(3)}{(2k+1)^2} + \frac{24\zeta(2)}{(2k+1)^3} + 
\frac{32\log(2)}{(2k+1)^4}
\]
\bigskip

where $\frac{c_k}{d_k}:=(4(2k+1)^2)\left (
\frac{4}{(2k+1)^6}\sum_{i=1}^{k}\frac{1}{2i+1}+8\sum_{j=1}^{6}\left (\frac{1}{(2k+1)^j}\sum_{i=0}^{k}\frac{1}{(2i+1)^{7-j}}\right )\right) >0 $ is a certain irreducible rational number.
\bigskip

In those two inequalities, to obtain irrationality of $\zeta(5)$ one would need that after multiplying all sides by $b_k$ (respectively $d_k$) 
the resulting right-hand side expression be a function that goes to $0$ as $k$ goes to infinity, which is not the case. 
\bigskip

This led the author to think about finding a function whose supremum on $[0,1]$ has a faster decay than $\mathcal{O}\left(\frac{1}{(2k+1)^2}\right)$. 
For any positive integer $m$, we can show in a similar fashion as before the closed-form expression of the following one-dimensional integral (where $s_{k,m}$ 
and $t_{k,m}$ are positive integers resulting from the combined completions of the sums):

\[
\int\limits_{[0;1]} \frac{\log(x)}{x}\log(1-x) (\log(x))^m x^{2k+1} dx = \sum_{u=1}^{+\infty}\frac{-(m+1)!}{u(u+2k+1)^{m+2}} 
= \sum_{j=0}^{m}\frac{(m+1)!\zeta(m+2-j)}{(2k+1)^{j+1}} -\frac{s_{k,m}}{t_{k,m}}
\]
\bigskip

Then defining the numbers: 

\[
Z_{k,m}:= \int\limits_{[0;1]} \frac{\log(x)}{x}\log(1-x) (\log(x))^m x^{2k+1} dx - \sum_{j=1}^{m}\frac{(m+1)!\zeta(m+2-j)}{(2k+1)^{j+1}}
=\frac{(m+1)!\zeta(m+2)}{2k+1} - \frac{s_{k,m}}{t_{k,m}}
\]
\bigskip

we obtain, after using the fact that the supremum of $|(\log(x))^mx^{2k+1}|$ occurs at $x^*=e^{-m/(2k+1)}$, and multiplying both sides by $2k+1$, that:

\[
0<\left|(m+1)!\zeta(m+2)-\frac{(2k+1)s_{k,m}}{t_{k,m}} \right | \leq \frac{m^m\zeta(3)}{(2k+1)^{m-1}e^m}+ \sum_{j=1}^{m}\frac{(m+1)!\zeta(m+2-j)}{(2k+1)^{j}}
\]
\bigskip

While the right hand side does tend to $0$ as $k$ goes to infinity, contrarily to what was falsely claimed in a previous version of this work, 
this does not prove the irrationality of $\zeta(m+2)$. For that, 
one would first need to multiply all sides by $t_{k,m}$ and that the resulting right-hand side be a function that goes to $0$ as $k$ goes to infinity. 
We have not computed $t_{k,m}$ explicitely, but it appears not to be the case.

\end{document}